\newlist{steps}{enumerate}{1}
\setlist[steps, 1]{label = Step \arabic*:}
\newtheorem{theorem}{Theorem}
\newtheorem{remark}{Remark}
\newtheorem{lemma}{Lemma}
\newtheorem{definition}{Definition}
\newcounter{example}[section]
\newenvironment{example}[1][]{\refstepcounter{example}\par\medskip
   \noindent \textbf{Example~\theexample. #1} \rmfamily}{\medskip}
\begin{document}
\title{Berglund-H\"ubsch Transpose Rule and Sasakian Geometry}

\author{Ralph R. Gomez}
\address{ Department of Mathematics and Statistics\\
		Swarthmore College \\
		Swarthmore, PA 19081}

\date{\today}

\begin{abstract} We apply the Berglund-H\"ubsch transpose rule from BHK mirror symmetry to show that to an $n-1$-dimensional Calabi-Yau orbifold in weighted projective space defined by an invertible  polynomial, we can associate four (possibly) distinct Sasaki manifolds of dimension $2n+1$ which are $n-1$-connected and admit a metric of positive Ricci curvature. We apply this theorem to show that for a given K3 orbifold, there exists four seven dimensional Sasakian manifolds of positive Ricci curvature, two of which are actually Sasaki-Einstein.

\end{abstract}

\maketitle

\section{Introduction}\label{S:intro}
\indent In this article, we use the Berglund-H\"ubsch transposition rule that is involved in Berglund-H\"ubsch-Krawitz (BHK) mirror symmetry to show that to a compact $n-1$-dimensional Calabi-Yau hypersurface defined by an invertible polynomial in weighted projective space one can associate four (possibly) topologically distinct $2n+1$ dimensional Sasaki manifolds $M_i, i=1,\ldots,4$  of positive Ricci curvature for which $H_{n}(M_i, \mathbb{Z})$ is completely determined. Furthermore, we give an explicit example in which some of these manifolds are Sasaki-Einstein of positive scalar curvature. As we discuss, the construction proposed is motivated by a new relationship we observe between two specific Fano K\"ahler-Einstein orbifolds used in \cite{BG2} to prove the existence of Sasaki-Einstein metrics on $S^{2}\times S^{3}$. The key insight we notice is that these two orbifolds are related to one another via the Berglund-H\"ubsch transpose rule, which is a key rule in the so-called BHK construction of mirror Calabi-Yau pairs.\\
\indent 
Briefly, mirror symmetry is a conjectured phenomenon that occurs between Calabi-Yau varieties.
BHK mirror symmetry, in particular, is a construction developed by P. Berglund and T. H\"ubsch in the 1990s \cite{BerHub}, and later expanded upon and generalized by Krawitz in 2009 \cite{Kraw}. In their construction one begins by associating the $n\times n$ matrix of exponents from the invertible polynomial which defines a Calabi-Yau orbifold $X$ in weighted projective space. Then by taking the transpose of the matrix, called the Berglund-H\"ubsch transpose, it is possible to construct the associated polynomial of a certain degree from the transpose matrix, which generates another Calabi-Yau orbifold hypersurface $X^{\wedge}$ in a typically different weighted projective space. Then by taking certain quotients of $X$ and $X^{\wedge}$, one can create Calabi-Yau mirror pairs.\\
\indent Now, many examples of Sasaki manifolds of positive Ricci curvature (and Sasaki-Einstein) come from the the $S^{1}$-orbibundle construction developed in \cite{BG2} (see also \cite{BG6} for comprehensive review) which is of the form $S^{1}\rightarrow L_f\rightarrow X_f$ where $L_f$ is the link of an isolated hypersurface singularity defined by the quasi-homogeneous polynomials $f$ and $X_f$ is a Fano orbifold hypersurface in weighted projective space. The main goal of this paper is to apply the Berglund-H\"ubsch transpose rule to the Fano orbifold hypersurface $X_f$ and explore its implications for the Sasakian geometry and topology on $L_f.$\\
\indent One particularly interesting feature of the construction which we give as an example in Section 5 is that if one starts with a K3 orbifold mirror pair (in the sense of Dolgachev \cite{Dolg} and Voisin \cite{Vois}), then the construction can give rise to Sasaki-Einstein structures in dimension seven and therefore by the AdS/CFT correspondence there should be a related superconformal theory in dimension three. Thus the construction described in this article provides a framework by which one can go from a mirror symmetry setting to an AdS/CFT setting. 

\indent This paper is organized as follows: in Section 2 we review some preliminaries needed for the main theorem. In Section 3, we review the BHK mirror symmetry construction and in Section 4 we state and prove our main theorem. In Section 5 we present an application of the main theorem. 

\section*{Acknowledgements}
I would like to thank Marco Aldi for suggesting to me the idea of looking into how ideas from BHK mirror symmetry might be used in Sasakian geometry. I also would like to thank Christina T\o nneson-Friedman for useful conversations and encouragement in writing this article. Parts of this article were written while attending the Union College Mathematics Conference 2022 so I want to thank the organizers for creating such a hospitable place to share ideas. I would also like to thank Jaime Cuadros Valle and Tyler Kelly for useful conversations. Finally, I would like to that the Lang Faculty fellowship from Swarthmore College.

\section{Background }\label{B:Back}

\subsection{Links of Isolated Hypersurface Singularities}

Let us consider the affine space $\mathbb{C}^{n+1}$ in conjunction with a weighted $\mathbb{C}^{*}$ action, denoted by 
$$(z_{0},\dots,z_{n})\rightarrow (\lambda^{w_0}z_{0},\ldots ,\lambda^{w_n}z_n)$$ where $\lambda \in \mathbb{C}^{*}.$ Throughout this paper, we assume $gcd(w_{0},\ldots, w_{n})=1.$ The positive integral weights $w_i$ are viewed as the components of a weight vector $\mathbf{w}\in (\mathbb{Z}^{+})^{n+1}.$  We will also use the notation $|\mathbf{w}|$ to denote the sum of the weights in the weight vector. Now, let $f\in \mathbb{C}[z_0, \ldots , z_n]$. We say that $f$ is a quasi-homogeneous polynomial of degree $d$ if
$$f(\lambda^{w_0}z_0,\ldots, \lambda^{w_n}z_n)=\lambda^{d}f(z_0,\ldots, z_n)$$
such that $d\in \mathbb{Z}^{+}.$ Note that the terminology ``weighted homogeneous" is also used in the literature. The weighted affine cone denoted by $C_f$ is the zero set of $f$ and we will assume throughout the paper that the only singularity of $f$ is at the origin. We are interested in special manifolds that arise from this cone, called the link $L_f$, defined by
$$L_f=C_f\cap S^{2n+1}$$
where we view the odd dimensional unit sphere in $\mathbb{C}^{n+1}.$ It was proven by Milnor in \cite{Milnor}  that $L_f$ is a $(n-2)$-connected, compact smooth manifold of dimension $2n-1.$ It is now well-known that the link $L_f$ and weighted projective space $\mathbb{P}(\mathbf{w})$ are part of the following commutative diagram. See for example \cite{BG2} or \cite{BG6} for a review.

  \begin{center}
\begin{tikzcd}

L_f \arrow[r] \arrow[d, "\pi"] & S^{2n+1}_{\mathbf{w}} \arrow[d]\\
X_f \arrow[r] &\mathbb{P}(\mathbf{w})

\end{tikzcd}
\end{center}

\indent We think of the weighted projective space $\mathbb{P}(\mathbf{w})=\mathbb{C}^{n+1}\setminus \{0\}/ \mathbb{C}^{*}(\mathbf{w})$ where $\mathbb{C}^{*}(\mathbf{w})$ is the weighted $\mathbb{C}^{*}$ discussed above.\\
\indent Let us briefly explain the above commutative diagram. The manifold $S^{2n+1}_{\mathbf{w}}$ admits a weighted Sasakian structure and this induces an Sasakian structure on $L_f$.  A Sasakian structure on an odd-dimensional manifold is a quadruple of structure tensors $\mathcal{S}=(\xi, \eta, \Phi, g)$ where $g$ is a Riemannian metric, $\xi$ is a unit length Killing vector field called the Reeb vector field, $\eta $ a contact 1-form where $\xi$ is the associated Reeb vector field, the endomorphism of the tangent bundle $\Phi$ is a $(1,1)$ tensor field which annihilates $\xi.$ Moreover, $\Phi$, when restricted to the contact sub-bundle $\mathcal{D}=\text{ker} \eta$, is an integrable complex structure.
The odd-dimensional sphere $S^{2n+1}$ admits a so-called weighted Sasakian structure defined by inserting weights into the structure tensors. We do not need this explicitly so we will omit the precise formulas but the interested reader should consult \cite{BG6} for further details. \\
\indent The flow of the weighted Reeb vector field $\xi_{\mathbf{w}}$ generates a locally free $S^{1}$ action on $S^{2n+1}$ as well as $L_f.$ Taking quotients, we obtain the weighted hypersurface $X_f$ and $\mathbb{P}(\mathbf{w}).$ Note that $X_f$ is a compact K\"ahler orbifold which cuts out a hypersurface in $\mathbb{P}(\mathbf{w})$. Finally, the horizontal arrows in the commutative diagram above are Sasaki and K\"ahler embeddings while the vertical arrows comprise $S^{1}$ orbibundles and which are orbifold Riemannian submersions.\\
\indent  We now state a key theorem which provides the differential geometric relationship between links, positive Ricci curvature, and weighted hypersurfaces. 

\begin{theorem}\emph{(}\cite{Boyer1}\emph{)}
Let $L_f$ be a link of an isolated hypersurface singularity defined by a quasi-homogeneous polynomial $f$ of degree $d$ with a Sasakian structure so that we have the $S^{1}$ orbibundle $S^{1}\rightarrow L_f\rightarrow X_f.$ If $X_f$ is a Fano hypersurface of degree $d$ ($|\mathbf{w}|-d>0)$ in weighted projective space, then $L_f$ admits a Sasakian structure such that the Sasakian metric has positive Ricci curvature. Moreover, if $X_f$ is a Fano K\"ahler-Einstein orbifold hypersurface in $\mathbb{P}(\mathbf{w})$ such that  $$Id<\frac{n}{n-1}\emph{min}_{i,j}(w_{i}w_{j})$$
where $I=|\mathbf{w}|-d$ then $L_f$ is Sasaki-Einstein of positive scalar curvature.
\end{theorem}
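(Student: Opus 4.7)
The plan is to exploit the transverse K\"ahler structure induced on $L_f$ by the $S^1$-orbibundle $\pi:L_f\rightarrow X_f$, whose leaf-space geometry recovers the K\"ahler orbifold $X_f$. Writing $g$ for the Sasakian metric, $\xi$ for the Reeb field, $\eta$ for the contact form, and $g^T$ for the induced transverse K\"ahler metric on $\mathcal{D}=\ker\eta$, the O'Neill-type identities for a Sasakian Riemannian submersion give
\begin{equation*}
\mathrm{Ric}_g(X,Y) = \mathrm{Ric}^T(X,Y) - 2 g^T(X,Y), \qquad \mathrm{Ric}_g(X,\xi) = 0, \qquad \mathrm{Ric}_g(\xi,\xi) = 2(n-1),
\end{equation*}
for $X,Y\in\mathcal{D}$, where the transverse Ricci tensor $\mathrm{Ric}^T$ descends along $\pi$ to the Ricci tensor of $X_f$. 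Consequently, positivity of $\mathrm{Ric}_g$ is equivalent to the transverse inequality $\mathrm{Ric}^T > 2g^T$ on $\mathcal{D}$.

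For the first assertion, the Fano hypothesis $|\mathbf{w}|-d>0$ is precisely the positivity of $c_1(X_f)$, or equivalently that the basic first Chern class $c_1^B$ of the characteristic foliation is represented by a positive basic $(1,1)$-form. By the basic (transverse) version of the Calabi-Yau theorem of El Kacimi-Alaoui, one may find a transverse K\"ahler form in this basic class whose transverse Ricci form is positive. Although the inequality $\mathrm{Ric}^T > 2g^T$ may fail a priori, applying the D-homothetic deformation
\[
(\xi,\eta,\Phi,g) \longmapsto (a^{-1}\xi,\, a\eta,\, \Phi,\, ag+a(a-1)\eta\otimes\eta)
\]
rescales the transverse metric to $g^T_a = a g^T$ while leaving $\mathrm{Ric}^T$ unchanged as a tensor on $\mathcal{D}$; for $a>0$ sufficiently small one gets $\mathrm{Ric}^T > 2g^T_a$, producing the desired Sasakian metric on $L_f$ of positive Ricci curvature.

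For the second assertion, the K\"ahler-Einstein hypothesis on $X_f$ promotes the transverse structure above to a transverse K\"ahler-Einstein one, $\mathrm{Ric}^T = \lambda g^T$ for some $\lambda>0$. Choosing the D-homothety parameter $a = \lambda/(2n)$ rescales the transverse Einstein constant to exactly $2n$, at which point the Sasakian Ricci identities above collapse to $\mathrm{Ric}_{g_a}=2(n-1)g_a$, i.e.\ a Sasaki-Einstein metric of positive scalar curvature. The additional hypothesis $Id < \tfrac{n}{n-1}\min_{i,j}(w_iw_j)$, where $I=|\mathbf{w}|-d$ is the Fano index, enters as the compatibility condition between the Fano index and the orbifold weights of $X_f$: it ensures that the transverse K\"ahler-Einstein datum lifts through the $S^1$-orbibundle to a genuine, smooth Einstein metric on $L_f$ rather than producing an Einstein structure with nontrivial orbifold discrepancies along the generic fibre.

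The main obstacle in carrying this argument out rigorously is the orbifold/foliated analytic step: one must invoke the Calabi-Yau-type existence result in its basic form, guaranteeing that the positive representative of $c_1^B$ can be realised by a basic K\"ahler form compatible with both the characteristic foliation and the orbifold singularities of $X_f$. Once that is in place, the remaining D-homothetic rescaling arguments are elementary, and the crux of the Sasaki-Einstein statement reduces to verifying that the weight inequality enforces precisely the regularity needed for the transverse K\"ahler-Einstein data to lift to a smooth Einstein metric on $L_f$.
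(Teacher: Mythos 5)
The paper itself gives no proof of this statement: Theorem 1 is quoted from Boyer's survey \cite{Boyer1}, so your attempt has to be measured against the argument in the Boyer--Galicki literature rather than against anything in this article. For the first assertion your outline is essentially the standard proof (Boyer--Galicki--Nakamaye): lift the Fano condition to positivity of the basic first Chern class, use the transverse (El Kacimi-Alaoui) or orbifold Calabi--Yau theorem to prescribe a positive transverse Ricci form, and then apply a D-homothetic deformation so that $\mathrm{Ric}^T>2g^T_a$. One detail you gloss over: the new transverse K\"ahler form produced by the transverse Yau theorem lies in the basic class $[\tfrac{1}{2}d\eta]_B$ and must be realized as $\tfrac{1}{2}d\tilde{\eta}$ for a deformed contact form with the same Reeb field (a type II deformation), so that one genuinely obtains a new Sasakian structure on $L_f$; this is standard but should be said. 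Your curvature identities are fine up to the dimension convention in $\mathrm{Ric}_g(\xi,\xi)$.

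The genuine gap is in your treatment of the second assertion, specifically the role you assign to the inequality $Id<\tfrac{n}{n-1}\min_{i,j}(w_iw_j)$. It is not a condition ensuring that the transverse K\"ahler--Einstein structure ``lifts through the $S^1$-orbibundle to a smooth Einstein metric'': no such numerical condition is needed for that step. Once $X_f$ carries an orbifold K\"ahler--Einstein metric with positive constant, its K\"ahler class is proportional to $c_1^{orb}(X_f)=I[\omega]$ by adjunction, hence proportional to the transverse K\"ahler class $[\tfrac{1}{2}d\eta]_B$ of the link, and the lift plus D-homothety already produces the Sasaki--Einstein metric on $L_f$ with no weight inequality whatsoever. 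In the source literature (Boyer--Galicki--Koll\'ar, building on Demailly--Koll\'ar and Johnson--Koll\'ar), the displayed inequality is a sufficient condition, via log canonical threshold/alpha-invariant estimates, for the \emph{existence} of the orbifold K\"ahler--Einstein metric on $X_f$ in the first place; the statement in this paper simply carries that hypothesis along. So your closing claim that ``the crux of the Sasaki--Einstein statement reduces to verifying that the weight inequality enforces precisely the regularity needed for the lift'' is a fabricated role for the hypothesis, and a proof organized around establishing that claim would not go through. Either prove the lifting statement without invoking the inequality (which is what the KE hypothesis allows), or present the inequality in its actual role as the Demailly--Koll\'ar-type criterion guaranteeing the K\"ahler--Einstein metric.
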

Thus, links of isolated hypersurface singularities provides a setting for constructing Sasakian metrics of positive Ricci curvature as well as Sasaki-Einstein metrics on certain odd dimensional manifolds via the existence of special hypersurfaces in weighted projective space. It is through this setting of investigating hypersurfaces in weighted projective space that we turn now to a mirror symmetry construction which relies upon hypersurfaces in weighted projective space. This mirror symmetry construction is called BHK mirror symmetry.

\subsection{The BHK Mirror Symmetry Construction}
Mirror symmetry is a conjectured duality between Calabi-Yau varieties which was initially discovered by theoretical physicists Greene and Plesser \cite{GreenePlesser}. This duality originally arose in the context of string theory in the 1990s.  In its classical formulation the main idea of mirror symmetry is the following: if $X$ is a Calabi-Yau variety of dimension $n$, then there exists a Calabi-Yau variety $X^{\wedge}$ such that 
$$H^{p,q}(X,\mathbb{C})\cong H^{n-p,q}(X^{\wedge},\mathbb{C})$$
where $X$ and $X^{\wedge}$ are called a mirror pair. Since then there have been many mathematical constructions of mirror symmetry in a wide variety of contexts. Shortly after mirror symmetry emerged, the physicists P. Berglund and T. H\"ubsch  developed a very fascinating constructing that allowed one to construct many examples of mirror pairs \cite{BerHub}. This method was overshadowed by the powerful tools of toric constructions of mirror pairs developed by Batyrev and Borisov \cite{Bat}, \cite{BatBor}. A few years later, Krawitz \cite{Kraw} revived the construction by making it more mathematically precise and at the same time generalizing the construction. \\
\indent Consider the quasi-homogeneous polynomial $f$ of degree $d$ with weight vector $\mathbf{w}$ whose zero locus defines a hypersurface in weighted projective space $\mathbb{P}(\mathbf{w}).$ We shall consider a very special type of quasi-homogeneous polynomial, called an invertible polynomial, which can be represented by
\begin{equation}
f=\sum_{i=0}^{n}\prod_{j=0}^{n}x_j^{a_{ij}}.
\end{equation}
Note that the polynomial has $n$ variables containing $n$ monomials. We must also assume that $f$ when viewed as a map $f:\mathbb{A}^{n+1}\rightarrow \mathbb{A}^{1}$
has a single critical point only at the origin or equivalently its affine cone is smooth outside $(0,...,0).$ This condition is called quasi-smooth. This allows us to state the following:
\begin{definition}
A quasi-smooth, quasi-homogeneous polynomial, as given above by $(1)$ is invertible if the corresponding $n\times n$ matrix of exponents $A=(a_{ij})$ is invertible. 
\end{definition}

An invertible polynomial is a rather restrictive condition. We denote the inverse matrix by $A^{-1}=(a^{ij})_{i,j=1,...,n}.$ Observe that the matrix $A$, the weight vector $\mathbf{w}$ and the degree of the polynomial $d$ must satisfy the equation $A\mathbf{w}=\mathbf{d}$ where we view $\mathbf{d}$ as the $n\times 1$ column vector with all of its entries the scalar $d.$ Now, we can define $q_{i}=\sum_{j=1}^{n}a^{ij}$ to mean the sum of the entries of the $i$-th row of the inverse matrix $A^{-1}.$ Therefore, if we let $d$ be the least common denominator of the $q_{i}$ then the components of the weight vector $w_i$ are given by $w_{i}=d q_i.$\\
\indent It was shown in \cite{KreSka} that one can classify these polynomials. Such polynomials come in three types, sometimes called the \emph{atomic types}.
\begin{theorem}
Any invertible polynomial is a sum of atomic types:
\begin{enumerate}
\item Fermat: $f=x^{a}$
\item Loop: $f=x_{0}^{a_0}x_{1}+x_{1}^{a_1}x_2+\ldots +x_{n-1}^{a_{n-1}}x_{n}+x_{n}^{a_n}x_0$
\item Chain: $f=x_{0}^{a_0}x_{1}+x_{1}^{a_1}x_2+\ldots +x_{n-1}^{a_{n-1}}x_{n}+x_{n}^{a_n}$
\end{enumerate}

\end{theorem}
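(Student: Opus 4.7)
The plan is to encode the combinatorial structure of $f$ in a directed graph on the variables and to use the invertibility of $A$ together with quasi-smoothness to rigidify that graph into a disjoint union of the three atomic types.

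First, I would make some preliminary observations. Invertibility of $A$ forces every row and every column to be nonzero, so each monomial $M_i = \prod_j x_j^{a_{ij}}$ involves at least one variable and each variable $x_j$ appears in at least one monomial. Counting rows versus columns yields the basic balance that the graph will eventually inherit: there are as many monomials as variables.

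The main combinatorial step is to show that each monomial involves at most two variables, and that a two-variable monomial necessarily has the form $x_j^a x_k$ with the second exponent equal to $1$. I would prove this by contradiction: suppose some monomial $M_i$ involves three variables, or two variables both with exponent at least $2$. Restricting the problem to the coordinate subspace spanned by the variables of $M_i$ and using the invertibility of $A$ to rule out significant contributions from other monomials on this subspace, one can construct a nonzero common zero of the relevant partial derivatives, contradicting quasi-smoothness.

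Granted this reduction, associate to $f$ a directed graph $G$ on the variables: a Fermat term $x_j^a$ becomes a self-loop at $j$, and a two-variable term $x_j^a x_k$ becomes an edge $j \to k$. Each variable is the ``leading'' variable of exactly one monomial; this again follows from invertibility of $A$, since no column of $A$ can be zero and the monomials must exhaust the leading variables bijectively in order for $\det A \neq 0$. Hence every vertex of $G$ has out-degree exactly $1$, so $G$ is a functional digraph on $n+1$ vertices.

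To finish, apply elementary graph theory. Every connected component of a functional digraph is a unique directed cycle with trees hanging off it; the constraints from the previous step (in particular, that interior vertices of a chain cannot branch because that would violate the at-most-two-variables bound) force each component to be exactly one of: an isolated self-loop (producing a Fermat factor), a proper cycle of length at least $2$ (a Loop), or a directed path terminating at a self-loop (a Chain). Since the components are vertex-disjoint, the monomials inside each component involve disjoint variables, and so $f$ decomposes as a sum over components, each of atomic type. The main obstacle is the combinatorial bound in the third paragraph: translating the analytic condition of quasi-smoothness into the sharp statement ``each monomial has at most two variables, at most one of which carries exponent greater than $1$'' requires a careful choice of test point in the Jacobian ideal together with a delicate use of $\det A \neq 0$ to rule out cancellations from other monomials. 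Once this bound is in place, the remaining steps are routine graph-theoretic bookkeeping.
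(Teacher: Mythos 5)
The paper does not prove this theorem at all: it is quoted from Kreuzer--Skarke \cite{KreSka}, so your sketch has to stand on its own, and as written it has genuine gaps. The decisive step --- that every monomial of an invertible polynomial is of the form $x_j^{a}$ or $x_j^{a}x_k$ --- is precisely where the content of the Kreuzer--Skarke classification lives, and you only assert it (``one can construct a nonzero common zero''), conceding yourself that the choice of test point is the main obstacle. Note moreover that to contradict quasi-smoothness you must produce a nonzero common zero of \emph{all} $n+1$ partial derivatives, including those with respect to variables not occurring in $M_i$; restricting to the coordinate subspace spanned by the variables of $M_i$ does not control those partials, so even the intended shape of the construction is unclear.

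More seriously, the two steps you call routine are justified by the wrong hypotheses and are false as stated. First, out-degree exactly one does not follow from $\det A\neq 0$: for $f=x_0^{2}x_1+x_0^{3}x_2+x_1^{a}x_2$ the exponent matrix has determinant $-2a-3\neq 0$ and every monomial has the allowed two-variable shape, yet $x_0$ is the ``leading'' variable of two monomials and $x_2$ of none; this configuration is excluded only because it is not quasi-smooth (all partials vanish at $(0,0,1)$). Second, branching is not excluded by the at-most-two-variables bound: $f=x_0^{2}x_2+x_1^{2}x_2+x_2^{a}$ has an invertible exponent matrix, out-degree one at every vertex, and only one- or two-variable monomials, but its digraph is a self-loop at $x_2$ with two incoming edges (a tree hanging on a cycle); it is ruled out only because it is singular along $\{x_2=0,\ x_0^{2}+x_1^{2}=0\}$, i.e.\ again by quasi-smoothness, not by your combinatorial bound. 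So both the out-degree-one claim and the exclusion of in-branching require separate nondegeneracy arguments of the same analytic kind as your ``main obstacle,'' and without them the final graph-theoretic bookkeeping does not yield the decomposition into Fermat, loop and chain pieces.
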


\indent We can now state the BHK mirror symmetry construction. Let $X_f$ be a Calabi-Yau hypersurface in $\mathbb{P}(\mathbf{w})$ defined by the invertible polynomial $f$ of degree $d$ and form the square matrix $A_f.$ Let $T$ denote transpose and form the matrix $A^{T}$. We call this the Berglund-H\"ubsch transpose rule. It is now possible to create the new invertible polynomial $f_{T}$
and so as we discussed above one needs to find $\mathbf{w}_{T}$ and $d_T$ which obey $A^{T}\mathbf{w}_T=\mathbf{d}_T. $ The new quasi-homogenous polynomial $f_{T}$ of degree $d_T$ in the weighted projective space $\mathbb{P}(\mathbf{w}_T)$ defines the orbifold $X_{f_T}$ which is again a Calabi-Yau hypersurface (see next section for proof) where $A^{T}_{f}=A_{f_{T}}.$ We can represent this process via the diagram:

 \begin{center}
\begin{tikzcd}

f \arrow[r] \arrow[d] & f_{T}\\
A_{f} \arrow[r,"T"] &A_{f}^{T}\arrow[u]\\
\end{tikzcd}
\end{center}
\indent It is worthwhile to mention that if $f$ is an invertible polynomial then so is $f_{T}$. This follows from Theorem 1 in \cite{KreSka}.\\
\indent Now, one would hope that $X_f$ and $X_{f_T}$ are mirror Calabi-Yau pairs but unfortunately it is not true. The Fermat quintic is a good counterexample.  If $W$ is the Fermat quintic, then its mirror  turns out to be the Calabi-Yau variety obtained after resolving singularities of $W/(5\mathbb{Z})^{3}$. For there to be a genuine mirror relation coming from the above Calabi-Yau varieties,  one must quotient each Calabi-Yau hypersurface by specially crafted groups which are related to the automorphsims of $f$ and $f_{T}.$ In fact, in \cite{ChiodoRuan},  Chiodo and Ruan were  able to prove a mirror symmetry relationship involving $X_f$ and $X_{f_T} $ which is stated as 
$$H^{p,q}_{CR}(X_{f}/ \tilde{G}, \mathbb{C}) \cong H^{n-1-p,q}_{CR}(X_{f_{T}}/\tilde{G}^{T},\mathbb{C})$$
with respect to Chen-Ruan cohomology. Here $\tilde{G}$ and $\tilde{G}^T$ are the carefully crafted groups coming from the automorphism groups. We do not need the precise formulation of these groups in this paper but their definitions can be found for example in  \cite{Kraw} and \cite{ChiodoRuan}.

\subsection{Sasakian Geometry and Topology of Links} 
To elucidate some of the topology of the link $L_f$, we can compute $b_{n-1}(L_f)$ and torsion in $H_{n-1}(L_{f}, \mathbb{Z}).$ This topological data is encoded in the degree $d$ and the weight vector $\mathbf{w}$ \cite{Orlik}. In this section we review the combinatorial formulas for the Betti numbers and the torsion groups in homology.\\
\indent Given $f$ of degree $d$ with weight vector $\mathbf{w}=(w_0, w_1,..., w_n)$, it is useful to first define the quantities
$$u_{i}=\frac{d}{gcd(d,w_{i})}, \hspace{1.5cm} v_{i}=\frac{w_{i}}{gcd(d,w_i)}.$$
The formula for the Betti number $b_{n-1}(L_f)$  was devised by Milnor and Orlik in \cite{MilnOrl} and is given by:
$$b_{n-1}(L_{f})=\sum(-1)^{n+1-s}\frac{u_{i_{1}},\ldots u_{i_s}}{v_{i_1}\ldots v_{i_s}\text{lcm}(u_{i_1},\ldots ,u_{i_s})}.$$
Here the sum is over all possible $2^{n+1}$ subsets $\{i_1,\ldots,i_s\}$ of $\{0,\ldots n\}.$\\
\indent For the torsion of the homology group, Orlik conjectured \cite{Orlik} that for a given link $L_{f}$ of dimension $2n-1$ one has
\begin{equation}
H_{n-1}(L_{f},\mathbb{Z})_{tor}=\mathbb{Z}_{d_1}\oplus\mathbb{Z}_{d_2}\oplus \cdots \oplus \mathbb{Z}_{d_r}
\end{equation}
Let us now discuss how the $d_{i}$ data are given, following closely \cite{BG6}.
Given an index set $\{i_1,i_2,....,i_s\}$, define $I$ to be
the set of all of the $2^s$ subsets and let us designate $J$ to be all of the proper subsets.
For each possible subset, inductively define the numbers $c_{i_{1},...,i_{s}}$
and $k_{i_{1},...,i_{s}}$. For each ordered
subset $\{i_{1},...,i_{s}\}\subset \{0,1,2,...,n\}$ with $i_{1}<i_2<\cdots <i_{s}$
one defines the set of $2^s$ positive integers, beginning with
$c_{\emptyset}=gcd(u_0,...,u_n):$
$$c_{i_{1},...,i_{s}}=\frac{gcd(u_{0},\ldots,\widehat{u}_{i_1},\ldots,\widehat{u}_{i_s},\ldots,u_n)}{\displaystyle \prod_{J}c_{j_{1},\ldots,j_{t}}}.$$
Now, to get the $k's$:
$$k_{i_{1},...,i_{s}}=\epsilon_{n-s+1}\kappa_{i_1,...,i_s}=\epsilon_{n-s+1}\displaystyle\sum_{I}(-1)^{s-t}\frac{u_{j_1}\cdots u_{j_t}}{v_{j_{1}}\cdots v_{j_t}lcm(u_{j_1},\ldots, u_{j_t})}$$
where
$$\epsilon_{n-s+1}=
\begin{cases}
0, & \text{if } n-s+1\text{ is even}\\
1, & \text{if } n-s+1\text{ is odd.}
\end{cases}
$$
Then for each $1\leq j \leq r=\lfloor max\{k_{i_{1},\ldots ,{i_s}}\}\rfloor$ we put
$$d_{j}=\prod_{k_{i_1,\ldots, i_s}\geq j}c_{i_{1},\ldots, i_{s}}.$$

Though the full conjecture is still open 45 years later, it is known to hold in certain cases.  However, substantial progress was made on the conjecture by Hertling and Mase in \cite{HertMase}. In particular, the authors were able to show the following:

\begin{theorem}
\emph{(}\cite{HertMase}\emph{)} Orlik's conjecture holds for all invertible polynomials.
\end{theorem}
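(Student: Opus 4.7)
The plan is to leverage the classification of invertible polynomials as Thom-Sebastiani sums of the three atomic types (Theorem 2) and to recast Orlik's combinatorial formula as a statement about the integral monodromy of the Milnor fiber. From the Milnor fibration of $f$ and the Wang sequence associated to $L_f$, the torsion subgroup $H_{n-1}(L_f,\mathbb{Z})_{\rm tor}$ is isomorphic to the torsion part of $\operatorname{coker}(h_{*} - \mathrm{Id})$, where $h_{*}$ acts on the integral homology of the Milnor fiber. The conjecture thus amounts to computing the elementary divisors of $h_{*} - \mathrm{Id}$ and matching them against the $d_j$ produced by Orlik's recipe.

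The first step is to verify the statement by direct calculation for each atomic type. For a Fermat monomial $x^{a}$, the Milnor fiber consists of $a$ points and the monodromy is a cyclic permutation, so the torsion can be written down explicitly. For chains and loops, one uses a Brieskorn-style description of the Milnor fiber together with a recursive (for chains) or circulant (for loops) description of the monodromy matrix, reading off the invariants $u_i = d/\gcd(d,w_i)$ and $v_i = w_i/\gcd(d,w_i)$ from the integer weight system. The divisibility relations packaged in Orlik's $c_{i_1,\ldots,i_s}$ should then emerge naturally as the Smith normal form entries of $h_{*}-\mathrm{Id}$ in each atomic case.

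The second step is to establish a Thom-Sebastiani-type formula at the level of elementary divisors: when $f$ decomposes as a sum of atomic pieces in disjoint variables, the integral $(h_{*}-\mathrm{Id})$-module for $f$ is a join (tensor-like product) of those for each atomic piece, and one must show that Orlik's combinatorial recipe for the combined $c_{i_1,\ldots,i_s}$ and $k_{i_1,\ldots,i_s}$ respects this join. The main obstacle, and where I expect the bulk of the Hertling-Mase argument to lie, is precisely this integral compatibility: rational monodromy factorizes cleanly via eigenvalue/cyclotomic data, but the elementary divisors over $\mathbb{Z}$ depend sensitively on how the parity cutoff $\epsilon_{n-s+1}$ interacts with the parities arising in each atomic summand, and on how gcd/lcm structures combine across disjoint variable sets. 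Once this bookkeeping is settled, the atomic computations of step one propagate to every invertible polynomial via the classification, yielding the theorem.
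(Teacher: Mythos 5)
The first thing to note is that the paper does not prove this statement at all: it is quoted verbatim from Hertling--Mase \cite{HertMase} and used as a black box (the remark following it even says ``far more was proved in \cite{HertMase}''). So there is no in-paper argument to compare yours against; the only fair comparison is with the actual Hertling--Mase proof, and measured against that your text is a plan rather than a proof. Your framing is correct as far as it goes: the Wang sequence of the Milnor fibration does identify $H_{n-1}(L_f,\mathbb{Z})$ with $\operatorname{coker}(h_*-\mathrm{Id})$ on the middle homology of the Milnor fiber, so Orlik's conjecture is indeed a statement about the elementary divisors of $h_*-\mathrm{Id}$, and the Kreuzer--Skarke classification does reduce everything to Thom--Sebastiani (join) sums of Fermat, chain and loop pieces.

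The genuine gap is that both of your two steps are asserted rather than carried out, and they are exactly where the difficulty lives. For step one, the integral monodromy of a single chain or loop block is not a routine ``read off the Smith normal form'' computation: the rational monodromy (eigenvalues, characteristic polynomial) has been known since Milnor--Orlik, but pinning down the $\mathbb{Z}[t]$-module structure -- showing it splits into the cyclic ``Orlik blocks'' $\mathbb{Z}[t]/\bigl(\prod \Phi_m(t)\bigr)$ predicted by the conjecture -- is the substantial content, and you give no argument for it. For step two, you correctly name the obstacle (integral compatibility of the join: elementary divisors of $h_f\otimes h_g-\mathrm{Id}$ are not determined by those of the factors) and then write that you ``expect the bulk of the Hertling--Mase argument to lie'' there, which is an acknowledgment that the step is unproved, not a proof of it. Hertling--Mase resolve precisely these two points by delicate number-theoretic control of tensor products of Orlik blocks (divisibility conditions on the orders of the eigenvalue roots of unity), and nothing in your sketch substitutes for that. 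As written, the proposal is a correct road map to the literature, but it does not establish the theorem.
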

Actually far more was proved in \cite{HertMase} but for our purposes, this form of the statement is all that is needed for the paper. Since we are dealing exclusively with invertible polynomials, this theorem allows to use the above formulas to explicitly compute $H_{n-1}(L_f, \mathbb{Z}).$

\section{The Berglund-H\"ubsch Transpose Rule and Sasakian Geometry}

\indent Our goal for this paper is to explore the Berglund-H\"ubsch transpose rule in the context of Fano  orbifolds in weighted projective space and thus generate Sasakian structures of positive Ricci curvature on the links. To emphasize the utility of using the rule for links in Sasakian geometry, we will discuss a novel interpretation of some examples in \cite{BG2}, which yielded new examples of Sasaki-Einstein metrics of positive scalar curvature in dimension five. 

\subsection{Key Example with Fano K\"ahler-Einstein Orbifolds}
In their paper \cite{BG2}, the authors show the existence of Sasaki-Einstein metrics on $S^{2}\times S^{3}$ and $2\#S^{2}\times S^{3}$. This is accomplished by using three Fano K\"ahler-Einstein orbifold hypersurfaces in weighted projective space and then arguing that the links over these hypersurfaces are diffeomorphic to the desired manifolds. These examples are given by quasi-homogeneous polynomials of degree 60 and 256.
\begin{enumerate}
\item $f_{60}=z_{0}^{5}z_{1}+z_{0}z_{2}^{3}+z_{1}^{4}+z_{3}^{3} \subset \mathbb{P}(9,15,17,20)$,
 \item $f_{256}=z_{0}^{17}z_{2}+z_{0}z_{1}^{5}+z_{1}z_{2}^{3}+z_{3}^{2}\subset \mathbb{P}(11,49,69,128)$, 
 \item $g_{256}=z_{0}^{17}z_1+z_{1}^{5}z_{2}+z_{0}z_{2}^{3}+z_{3}^{2} \subset \mathbb{P}(13,35,81,128).$
\end{enumerate}
\indent The hypersurfaces (and hence the links) are said to be \emph{well-formed}  if the hypersurface contains no codimension $2$ singular stratum of $\mathbb{P}(w_0, \ldots, w_n).$ It turns out that this condition is equivalent to 
$$\text{gcd}(w_{1},\ldots,\hat{w_i},\ldots \hat{w_j},\ldots w_n)| d$$
for all $i,j=1,\ldots n.$ The hat means delete the element. We will see later that interesting links also arise when the hypersurface is not well-formed.\\
\indent Let us focus on the second hypersurface $f_{256}$ in the weighted projective space $\mathbb{P}(11,49,69,128).$ Observe the quasi-homogenous polynomial is indeed invertible since it is quasi-smooth and in fact a sum of a loop and Fermat polynomial. We can then build the associated $4\times 4$ matrix $A$ given by
$$
A=
 \begin{bmatrix}
 17& 0 & 1 & 0\\
 1& 5 & 0 & 0\\
 0& 1 & 3 & 0\\
 0 & 0 & 0 & 2
  \end{bmatrix}
.$$ 
We compute now $A^{T}$ and so the corresponding quasi-homogenous polynomial is given by 
$$z_{0}^{17}z_1+z_{1}^{5}z_{2}+z_{0}z_{2}^{3}+z_{3}^{2}.$$
The observation to make here is that this polynomial is exactly $g_{256}$. Furthermore, to find the corresponding weight vector $\mathbf{w}$ and the degree we must solve $A^{T}\mathbf{w}=\mathbf{d}$ and we follow the procedure discussed in Section 2.2.  Since $A^{T}$ is invertible, to ensure $\mathbf{w}\in (\mathbb{Z}^{+})^{4}$ we get that $d=256$ and $\mathbf{w}=(13,35,81,128).$ We therefore have discovered the  Fano K\"ahler-Einstein oribifolds defined by $f_{256}$ and $g_{256}$ are actually related via the Berglund-H\"ubsch transpose rule. This example reveals that the Fano K\"ahler-Einstein condition in this example was preserved under the Berglund-H\"ubsch transpose rule.\\
\indent Let's interpret this in terms of the Sasaki-Einstein structures on the links over the orbifolds. As described in \cite{BG2}, it was further proved that the links over these two orbifolds are diffeomorphic to $S^{2}\times S^{3}.$ Thus, the Berglund-H\"ubsch transpose rule has allowed us, in this case, to transform from one Sasaki-Einstein structure on $S^{2}\times S^{3}$ to another distinct Sasaki-Einstein structure on $S^{2}\times S^{3}.$

\subsection{The invertible polynomial $f_{60}$}
One may ask what happens if we use the Berglund-H\"ubsch rule for the remaining polynomial $f_{60}.$

 In \cite{BG2}, the authors studied the following hypersurface given  by
 $$f_{60}=z_{0}^{5}z_{1}+z_{0}z_{2}^{3}+z_{1}^{4}+z_{3}^{3}$$
 of degree $d=60$ in the weighted projective space $\mathbf{w}=(9,15,17,20).$ It is known that this Fano orbifold admits an orbifold K\"ahler-Einstein metric \cite{DemKoll}.  By a change of variables, it is easy to see $f_{60} $ is a sum of a chain polynomial and Fermat monomial and so is invertible.  Now let's use the Berglund-H\"ubsch transpose rule. The exponential matrix for $f$ is given by
$$
A=
 \begin{bmatrix}
 5& 1 & 0 & 0\\
 1 & 0 & 3 & 0\\
 0& 4 & 0 & 0\\
 0 & 0 & 0 & 3
  \end{bmatrix}
$$ 
 Using the Bergl\"und-H\"ubsch transpose rule, we compute $A^{T}$. The corresponding invertible polynomial is therefore
 $$\tilde{f}_{T}=z_{0}^{5}z_{1}+z_{0}z_{2}^{4}+z_{1}^{3}+z_{3}^3.$$
 (We can rewrite this as ($z_{0}^{3}+z_{0}z_{1}^{5}+z_{1}z_{2}^{4})+z_{3}^{3}$ which is again a sum of a chain and Fermat.) 
It is easy to determine $\mathbf{w}_{T}$ and $d$ which obey $A\mathbf{w}_{T}=\mathbf{d}_T.$ We find that $\mathbf{w}_T=(8,20,13,20)$ and $d_{T}=d=60$ works. It is interesting to note that the corresponding link $L_{\tilde{f}}$ admits a Sasaki-Einstein metric since the inequality $Id<\frac{n}{n-1}\text{min}_{i,j}(w_{i}w_{j})$ is satisfied. Furthermore, it should be noted that the hypersurface does have a branch divisor for $C=\{z_{0}=0\}$ in the singular locus. By Theorem 5.7 of  \cite{KollarfiveD}, the genus of this branch divisor determines the topology of $L_f.$ Using the genus formula given by
 $$g(C)=\frac{1}{2}\left(\frac{d^{2}}{w_{1}w_{2}w_{3}}-d\sum_{i<j}\frac{\text{gcd}(w_i,w_j)}{w_{i}w_{j}}+\sum\frac{\text{gcd}(d,w_i)}{w_i}-1\right)$$
we find $g(C)=0$ which implies that $H_{2}(L_f, \mathbb{Z})$ has no torsion. Moreover, calculation of the second Betti number as discussed in  Section 2.3, we find $b_{2}(L_{\tilde{f}})=2$ and so $L_f$ is diffeomorphic to $2\# S^{2} \times S^{3}.$  (Alternatively, we could have also used the ideas from Section 2.3.) The Milnor number of the link is $94$ and is therefore distinct from other known Sasaki-Einstein structures on $2\#S^{2}\times S^{3}$.\\
 \indent These examples emphasize and highlight the utility of the Berglund-H\"ubsch transpose rule in relation to Sasakian geometry. In the next section, we establish the general connection between the two ideas.

\section{Main results}
In this section, we prove our main theorem. But first, we establish a simple lemma. The Calabi-Yau case is already well-known but we include a proof here for reference.

 \begin{lemma}
 Let $f$ be an invertible polynomial defining a hypersurface $X_{f}$ in weighted projective space $\mathbb{P}(\mathbf{w})$ and let $A=(a_{ij})_{i,j=1,...,n}$ be the associated invertible $n\times n$ matrix of exponents. Further let $X_{f_T}$ be the associated hypersurface in $\mathbb{P}(\mathbf{w}_{T})$ defined by $f_{T}$ of degree $d_{T}.$ Then we have:
 
\begin{enumerate}
\item If $X_{f}$ is a Calabi-Yau hypersurface, then so is $X_{f_{T}}.$
\item If $X_{f}$ is a Fano hypersurface, then so is $X_{f_{T}}$
 \end{enumerate}
 \end{lemma}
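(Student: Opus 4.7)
The plan is to reduce both statements to a single invariant of the exponent matrix that is manifestly symmetric under transposition. Recall from Section 2.2 that the weights and degree satisfy $A\mathbf{w}=\mathbf{d}$, where $\mathbf{d}=(d,\dots,d)^{T}$, so $\mathbf{w}=d\,A^{-1}\mathbf{1}$ with $\mathbf{1}=(1,\dots,1)^{T}$. Taking the sum of the components,
\begin{equation*}
\frac{|\mathbf{w}|}{d}=\mathbf{1}^{T}A^{-1}\mathbf{1}=\sum_{i,j}a^{ij}=\sum_{i}q_{i}.
\end{equation*}
So both the Calabi-Yau condition $|\mathbf{w}|=d$ and the Fano condition $|\mathbf{w}|-d>0$ depend only on the single scalar $\sigma(A):=\mathbf{1}^{T}A^{-1}\mathbf{1}$: being Calabi-Yau is equivalent to $\sigma(A)=1$ and being Fano is equivalent to $\sigma(A)>1$.

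Next I would apply the same computation verbatim to $A^{T}=A_{f_{T}}$. Since $(A^{T})^{-1}=(A^{-1})^{T}$, the sum of its entries equals the sum of entries of $A^{-1}$, so $\sigma(A^{T})=\sigma(A)$. In particular, the rational number $|\mathbf{w}_{T}|/d_{T}$ agrees with $|\mathbf{w}|/d$, regardless of the particular integers $d$ and $d_{T}$ produced by the least-common-denominator normalization of the respective $q_{i}$'s. Therefore $|\mathbf{w}|=d$ forces $|\mathbf{w}_{T}|=d_{T}$, proving (1), and $|\mathbf{w}|>d$ forces $|\mathbf{w}_{T}|>d_{T}$, proving (2).

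The only subtlety worth spelling out, and the point I would be most careful about, is that $d$ and $d_{T}$ are defined separately as minimal positive integers clearing the denominators of the $q_{i}$ and of the row sums of $(A^{T})^{-1}=(A^{-1})^{T}$ respectively; these two integers need not coincide (as the example $f_{256}$ versus $g_{256}$ already shows $d=d_{T}=256$ only by coincidence, while $f_{60}$ likewise happens to have $d=d_{T}=60$). What matters for the proof is not the equality of $d$ and $d_{T}$ but rather the equality of the ratios $|\mathbf{w}|/d$ and $|\mathbf{w}_{T}|/d_{T}$, which is precisely the content of the observation that $\sigma(A)$ is transpose-invariant. Once this linear-algebra identity is in hand, both items of the lemma are immediate, and no further use is made of the specific loop/chain/Fermat decomposition of $f$.
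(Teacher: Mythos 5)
Your proof is correct and follows essentially the same route as the paper: both arguments reduce the Calabi--Yau and Fano conditions to the sum of the entries of $A^{-1}$ (your $\sigma(A)=\mathbf{1}^{T}A^{-1}\mathbf{1}$, equal to $|\mathbf{w}|/d$) and observe that this quantity is unchanged under transposition since $(A^{T})^{-1}=(A^{-1})^{T}$. Your explicit remark that $d$ and $d_{T}$ need not coincide and that only the ratios $|\mathbf{w}|/d$ and $|\mathbf{w}_{T}|/d_{T}$ matter is a nice clarification, but it is implicit in the paper's computation as well.
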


  \begin{proof}
 First, note that in either case, if $f$ is invertible then so is $f_{T}$ and this follows from Theorem 1 of \cite{KreSka}.
 Since $f$ is quasi-homogenous of degree $d$, we must have that $A\mathbf{w}=\mathbf{d}$ where $\mathbf{d}$ is the $n\times 1$ column vector with entries $d$.  The matrix $A$ is invertible with entries of  $A^{-1}$ denoted by $a^{ij}.$ Then the components of $\mathbf{w}$ which are positive integers denoted by $w_{i}$ are given by  $w_{i}=\sum_{j}a^{ij}d$. Moreover, since $X_f$ is Calabi-Yau, the condition $|\mathbf{w}|=d$ can be rewritten as 
 $$\sum_{i,j}a^{ij}=1.$$
 Now let $A^{T}$ denote the transpose of the exponential matrix. From this matrix, we would like to construct an associated invertible polynomial $f_{T}$  of degree $d_{T}$ in weighted projective space $\mathbb{P}(\mathbf{w}_T)$. To find the weights for $f_{T}$ we must solve $A^{T}\mathbf{w}_{T}=\mathbf{d}_{T}.$ Since $A$ is invertible we have
 $$\mathbf{w}_{T}=(A^{T})^{-1}\mathbf{d}_T=(A^{-1})^{T}\mathbf{d}_{T}$$ where $\mathbf{d}_{T}$ is an $n\times 1$ column vector with entries $d_{T}$. The components of $\mathbf{w}_{T}$,  which we will denote by $w^{T}_{i}$ to avoid cluttering of indices, are given by $$w^{T}_{i}=\sum_{i}a^{ji}d_{T}.$$   So to establish the first statement that $X_{f_T}$ is Calabi-Yau, we compute using the hypothesis $X_{f}$ is Calabi-Yau to obtain
 $$|\mathbf{w}_{T}|=\sum_{i,j}a^{ji}d_{T}=d_{T}.$$ Hence $|\mathbf{w}_T|=d_{T}$ and $X_{f_{T}}$ is Calabi-Yau.
 
 To establish the second statement, assume $X_{f}$ is Fano and hence the Fano condition is $|\mathbf{w}|-d>0$ i.e. 
 $$\displaystyle \sum_{i,j}a^{ij}>1.$$ 
 So, to establish that $X_{f_{T}}$ is Fano, we compute 
 $$|\mathbf{w}_{T}|-d_{T}=\left( \sum_{i,j}a^{ij}-1\right)d_{T}.$$ Since $\sum_{i,j}a^{ij}-1>0$ this implies $|\mathbf{w}_{T}|-d_{T}>0.$ Therefore, $X_{f_{T}}$ is Fano.
 
 \end{proof}

 We are now ready to state and prove the main theorem of this article.

\begin{theorem}
 Let $X_{f}$ be an $n-1$ dimensional Calabi-Yau hypersurface defined by an invertible polynomial $f(x_{0},...,x_{n})$, not of Fermat type, in weighted projective space $\mathbb{P}(w_{0},...,w_{n})$. Then there exists four (possibly) distinct $n-1$-connected Sasaki manifolds $M_{i}$, $i=1,2,3,4$ of dimension $2n+1$ of positive Ricci curvature such that $H_{n}(M_{i}, \mathbb{Z})$ is completely determined.
 \end{theorem}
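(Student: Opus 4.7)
The plan is to use the Berglund--H\"ubsch transpose to double the supply of Calabi--Yau data and a Fermat stabilization trick to convert each Calabi--Yau into a Fano, so that Boyer's Theorem~1 delivers the desired positive Ricci Sasakian metrics on the corresponding links. By Lemma~1, the transpose $f_T$ also defines an $(n{-}1)$-dimensional Calabi--Yau hypersurface $X_{f_T} \subset \mathbb{P}(\mathbf{w}_T)$; since $f$ is not of Fermat type---the only atomic type fixed by BHK transposition in Theorem~2---this genuinely yields two distinct CY invertible polynomials with (in general) different degrees $d = |\mathbf{w}|$ and $d_T = |\mathbf{w}_T|$.

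Because a Calabi--Yau link is Sasaki null rather than positively curved, I would Fanoify by adjoining a new variable $z$ and a Fermat monomial: for each $\varphi \in \{f,f_T\}$ and each integer $k \geq 2$ dividing $\deg \varphi$, set $\widetilde{\varphi} = \varphi + z^k$. The exponent matrix of $\widetilde{\varphi}$ is block-diagonal with blocks $A_\varphi$ and $[k]$, so it is still invertible; quasi-smoothness persists since $\partial_z \widetilde{\varphi} = k z^{k-1}$ forces $z = 0$ at any new critical point. The weight of $z$ is $\deg \varphi / k$, so
\[
\bigl|\widetilde{\mathbf{w}}\bigr| - \deg\varphi \;=\; \deg\varphi/k \;>\; 0,
\]
which makes $X_{\widetilde{\varphi}}$ a Fano hypersurface of complex dimension $n$. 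Taking two independent admissible values of $k$ for each of $\varphi = f$ and $\varphi = f_T$ (for instance $k=2$ when the relevant degree is even, and $k = \deg \varphi$, which always works and produces a new weight of $1$) produces four Fano invertible polynomials in $n+2$ variables, living in four (generally) distinct weighted projective spaces.

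Applying Boyer's Theorem~1 to each of these four Fano hypersurfaces yields four Sasakian manifolds $M_1, \ldots, M_4$ of positive Ricci curvature. Each $M_i$ is the link of an isolated hypersurface singularity in $\mathbb{C}^{n+2}$, so by the Milnor statement recalled in Section~2.1 it is compact, smooth, of real dimension $2(n+1)-1 = 2n+1$, and $(n-1)$-connected. The middle homology $H_n(M_i,\mathbb{Z})$ is then determined by the combinatorial formulas of Section~2.3: since every defining polynomial is invertible, Theorem~3 (Hertling--Mase) validates Orlik's conjecture in this setting, so both the rank and the torsion of $H_n(M_i,\mathbb{Z})$ are read off directly from the weight data.

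The main delicacy is not the positive-Ricci conclusion---an immediate consequence of Boyer's theorem once Fano is established---but rather the bookkeeping needed to verify that four admissible Fermat stabilizations are simultaneously available and land in four truly different weighted projective spaces. The ``possibly distinct'' hedge in the statement is what insures against coincidences when the data for $f$ and $f_T$ is too symmetric, the extreme collapse being precisely the Fermat-type polynomials the hypothesis excludes.
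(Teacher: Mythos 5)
Your overall strategy (stabilize the Calabi--Yau data by a Fermat monomial to make it Fano, then invoke Boyer's Theorem~1 for positive Ricci curvature, Milnor for $(n-1)$-connectedness, and Hertling--Mase/Orlik for $H_n$) matches the paper up to the point where the four manifolds are actually produced, but at that step you deviate and your construction has a genuine gap. The paper obtains its four Fano hypersurfaces as $\tilde f = z^{d}+f$, its Berglund--H\"ubsch transpose $\tilde f_T$, $\tilde g = z^{d'}+f_T$, and its transpose $\tilde g_T$; that is, the transpose rule is applied a second time, \emph{at the Fano level}, to the stabilized polynomials, and by Lemma~1(2) this always yields Fano invertible data with no extra hypotheses. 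You instead try to get two Fano models out of each of $f$ and $f_T$ by choosing two different Fermat exponents $k\geq 2$ with $k \mid \deg\varphi$. This requires the degree to have a proper divisor $\geq 2$, which can fail: an invertible Calabi--Yau polynomial can have prime degree (e.g.\ a degree-$5$ loop polynomial $x_0^4x_1+x_1^4x_2+\cdots+x_4^4x_0$ in $\mathbb{P}(1,1,1,1,1)$, whose transpose also has degree $5$), in which case only $k=\deg\varphi$ is admissible and your method produces at most two links, not four. So as written the argument does not prove the statement in general.

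A secondary point: even when two divisors exist, your quadruple is not the one the theorem is really about. The content of the paper's construction (and of the application in Section~5, e.g.\ Table~1, where $M_2$ and $M_4$ live in weighted projective spaces of degree $48$ obtained by transposing $\tilde f$ and $\tilde g$) is precisely that the transpose rule acting on the Fano-ized polynomials produces the second manifold in each branch; replacing that step by a second stabilization exponent both loses this connection and, as above, is not always available. The fix is simple: keep only the $k=\deg\varphi$ stabilization (weight-$1$ Fermat term), and obtain the other two manifolds by applying the Berglund--H\"ubsch transpose to $z^{d}+f$ and to $z^{d'}+f_T$, using Lemma~1(2) to see that the Fano condition is preserved; the rest of your argument (quasi-smoothness and invertibility of the block sum, dimension and connectivity of the links, and the Orlik/Hertling--Mase computation of $H_n$) then goes through as in the paper.
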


 \begin{proof}
 Let $f$ be an invertible polynomial of degree in $d$ that cuts out a Calabi-Yau orbifold hypersurface $X_{f}$ in weighted projective space $\mathbb{P}(\mathbf{w}).$ Define the new quasi-homogenous polynomial of the same degree $d$ given by $\tilde{f}=z^{d}+f$ which generates the new hypersurface $Y_{\tilde{f}}$ in the weighted projective space $\mathbb{P}(1, \mathbf{w}).$  It is clear that $\tilde{f}$ is invertible since it is a sum of a Fermat monomial $z^d$ and the invertible polynomial $f.$ Note that $1+|\mathbf{w}|-d=1+d-d=1>0$, where we have used $|\mathbf{w}|=d$. Thus  $Y_{\tilde{f}}$ is an orbifold Fano hypersurface. By Lemma 1 above we can construct $Y_{\tilde{f}_{T}}$ which is also an orbifold Fano hypersurface. Now by Theorem 1, we have $S^{1}$-orbibundles $M_{1}$ and $M_{2}$ over $Y_{\tilde{f}}$ and $Y_{\tilde{f}_{T}}$ respectively. Let us represent what we have so far in a diagram:
 \begin{center}
\begin{tikzcd}
& M_{1}\arrow[d] & M_{2}\arrow [d]\\
X_f \arrow[r,"\theta"]  & Y_{\tilde{f}} \arrow[r, "T"]
& Y_{\tilde{f}_{T}}. \\
\end{tikzcd}
\end{center}
 
 We are abusing notation here slightly for the sake of illustrating how the different spaces are related to one another by the construction. The map $\theta$ takes the given Calabi-Yau hypersurface and assigns to it the Fano orbifold $Y_{\tilde{f}}$ by mapping $f$ to $\tilde{f}=z^{d}+f$ where $z^{d}$ is of weight $1.$ The $T$ maps one Fano hypersurface to another via the Berglund-H\"ubsch transpose rule.\\
\indent Now let's return to our initial Calab-Yau hypersurface $X_f.$ We can apply the Berglund-H\"ubsch transpose rule  to it obtaining $X_{f_T}$ where $f_T$ is of degree $d'$. Using the same process as discussed above, we can use the $\theta$ map and the Berglund-H\"ubsch transpose rule $T$ to obtain the following sequence:
 \begin{center}
\begin{tikzcd}
& M_{3}\arrow[d] & M_{4}\arrow [d]\\
X_{f_T} \arrow[r,"\theta"]  & Y_{\tilde{g}}\arrow[r, "T"]
& Y_{\tilde{g_{T}}} \\
\end{tikzcd}
\end{center}
 \noindent where we view $M_3$ and $M_4$ as the total spaces arising from Theorem 1. The polynomial $\tilde{g}=z^{d'}+f_{T}$ is the orbifold Fano hypersurface of degree $d'$ in some weighted projective space We can put the two diagrams together getting a summary of the entire process:
 \begin{center}
\begin{tikzcd}
& M_{1}\arrow[d] & {M_2}\arrow [d]\\
X_f \arrow[r,"\theta"] \arrow[d, "T"] & Y_{\tilde{f}} \arrow[r, "T"]
& Y_{\tilde{f}_{T}} \\
X_{f_T} \arrow[r, "\theta"] &Y_{\tilde{g}}\arrow[r, "T"]
& Y_{\tilde{g}_{T}}\\
&M_{3}\arrow[u] & M_{4}\arrow [u]
\end{tikzcd}
\end{center}
 To conclude the proof that $M_i, i=1,2,3,4$ are Sasakian manifolds admitting metrics of positive Ricci curvature such that $H_{n}(M_{i}, \mathbb{Z})$ can be explicitly determined we now apply Theorem 1 and Theorem 3. The fact that $M_i$ are $n-1$-connected follows from the Milnor fibration theorem. This completes the proof.
  \end{proof}
  \begin{remark}
  In the hypothesis of Theorem 1 we exclude the case in which the initial Calabi-Yau orbifold defined by the invertible polynomial $f$ is Fermat since in this case $A$ is diagonal and so $A^{T}=A$ and so one gets the same Calabi-Yau orbifold upon application of the Berglund-H\"ubsch rule.\\
  \end{remark}
  \indent We have shown that if one starts with an input Calabi-Yau orbifold $X_f$ defined by an invertible polynomial, then it is possible to construct four Sasakian manifolds $M_{i}$ of positive Ricci curvature from it.\\

\section{Application}
In this section, we apply the main theorem to the case in which the initial data is a K3 orbifold hypersurface coming from Reid's list \cite{ReidsList} (see also \cite{Iano-Fletcher}). By Theorem 4 this will yield four topologically distinct Sasakian 7-manifolds of positive Ricci curvature. However, we can use results of I. Cheltsov \cite{Chelt} to conclude that at least two of the four Sasaki 7-manifolds are actually Sasaki-Einstein. More precisely, in \cite{Chelt} it was shown that from Reid's famous list of orbifold K3 surfaces in weighted projective space $\mathbb{P}(w_0, w_1, w_2, w_3)$, the Fano hypersurfaces in $\mathbb{P}(1, w_0, w_1, w_2, w_3)$ are also K\"ahler-Einstein (with the exception of four cases.) These hypersurface Fano 3-folds are shown to be K\"ahler-Einstein by analyzing the so-called log canonical threshold.\\

\begin{example}
 Consider the $K3$ hypersurface $X_{f}=\{f=z_1^{2}z_4+z_{2}^{4}+z_{1}z_{3}^{3}+z_{4}^8=0\}$ of degree $d =16$ in weighted projective space $\mathbb{P}(7,4,3,2).$ Using the map $\theta$, we can introduce a Fermat term $z^{16}_{0}$ of weight one to obtain the invertible polynomial $\tilde{f}=z_{0}^{16}+f$ and so this gives a Fano orbifold hypersurface $Y_{\tilde{f}}$ in $\mathbb{P}(1,7,4,3,2).$ By Corollary 1.4 in \cite{Chelt}, $Y_{\tilde{f}}$ is a Fano K\"ahler-Einstein orbifold and thus by Theorem 1 is a Sasaki-Einstein $7$-manifold $M_{1}$. From the invertible polynomial $\tilde{f}$ we can associate its exponent matrix $A$  given by 
 $$A=
 \begin{bmatrix}
 16 & 0 & 0 & 0 & 0\\
 0 & 2 & 0 & 0 & 1\\
 0 & 0 & 4 & 0 & 0\\
 0 & 1 & 0 & 3 & 0\\
 0& 0 & 0 & 0 &8\\
  \end{bmatrix} . 
 $$
 We apply the Berglund-H\"ubsch transpose rule to obtain $A^T$ and thus determine the Fano orbifold $Y_{\tilde{f}_{T}}$ defined by $\tilde{f}_{T}=z_{0}^{16}+z_{1}^{2}z_{3}+z_{2}^{4}+z_{3}^{3}+z_{1}z_{4}^{8}.$ A straightforward calculation determines $\mathbf{w}=(3,16,12,16,4)$ and the degree $d_{\tilde{f}_T}=48.$ By Theorem 1, $M_{2}$ is indeed a Sasakian manifold of positive Ricci curvature. It is unknown if  $M_{2}$ is actually Sasaki-Einstein. The link $M_{2} $ does not satisfy any of the known obstructions \cite{GMSY} to admit a Sasaki-Einstein metric so it is still possible the link $M_2$ is Sasaki-Einstein. We have so far been unable to prove this. Note that $Y_{\tilde{f}_{T}}$ is not well-formed, i.e. has a branch divisor so that complicates the matter.\\
 \indent Now going back to our input K3 orbifold hypersurface $X_{f}$ we can compute the exponential matrix $B$ given by
 $$B
 =
  \begin{bmatrix}
 2 & 0 & 0 & 1 \\
 0 & 4 & 0 & 0\\
 1 & 0 & 3 & 0 \\
 0 & 0 & 0 & 8\\
  \end{bmatrix} . 
 $$
 This yields the $K3$ surface $X_{f_{T}}$ defined by $f_{T}=z_{1}^{2}z_{3}+z_{2}^{4}+z_{3}^{3}+z_{1}z_{4}^{8}$ of degree $d'=12$ with weight vector $\mathbf{w}=(4,3,4,1).$  As before we apply the $\theta$ to introduce the Fermat term $z_{0}^{12}$ of weight one to obtain $\tilde{g}=z_{0}^{12}+f_{T}$ 
 and this gives a Fano orbifold hypersurface $Y_{\tilde{g}}$ in $\mathbb{P}(1,4,3,4,1)$. Again,  by Corollary 1.4 in \cite{Chelt}, this hypersurface is Fano K\"ahler-Einstein and therefore there exists a Sasaki-Einstein $7$ manifold $M_{3}$ which fibers over the base $Y_{\tilde{g}}$ by Theorem 1.\\
\indent Furthermore, we can now apply the Berglund-H\"ubsch transpose rule on $\tilde{g}$ to obtain $\tilde{g}_T=\{z_{0}^{12}+z_{1}^2z_{4}+z_{2}^{4}+z_{1}z_{3}^{3}+z_{4}^{8}=0\}$  of degree $48$ with weight vector $\mathbf{w}=(4,21,12,9,6).$ Thus we obtain the Fano orbifold $Y_{\tilde{g}_{T}}$ and so by Theorem 1, there exists a Sasakian 7-manifold $M_{4}$ with positive Ricci curvature. It is not known whether this manifold is actually Sasaki-Einstein.\\
\indent Furthermore, we can explicitly compute $H_{3}(M_{i},\mathbb{Z}) $ for $i=1,2,3,4$ using the methods described in Section 2.3.  In fact, a computer program\footnote{https://homologyoflinks.fishtank.swarthmore.edu} based on Orlik's conjecture was written by Evan Thomas which allows us to quickly compute the homology groups of the links. Therefore, we see that to the initial $K3$ orbifold hypersurface $X_{f}$, we can associate four manifolds of dimension $7$ which are $2$-connected and of positive Ricci curvature. At least two of these manifolds, are actually Sasaki-Einstein. We summarize the result  in Table 1. \\

\begin{table}
\begin{tabular}{ |p{1.7cm}|p{3.7cm}|p{3.7cm}|  }
\hline
 $M_i$ & polynomial & $H_{3}(M_{i},\mathbb{Z})$\\
\hline
$M_1$ & $z^{16}_{0}+z^{2}_{1}z_{4}+z_{2}^4+z_{1}z_{3}^{3}+z_{4}^8$  & $\mathbb{Z}^{108}\oplus\mathbb{Z}_{8}\oplus(\mathbb{Z}_2)^2$ \\ \hline

$M_2$ & $z^{16}_{0}+z^{2}_{1}z_{3}+z_{2}^4+z_{3}^{3}+z_1z_{4}^{8}$   &$\mathbb{Z}^{30}\oplus(\mathbb{Z}_{48})^2\oplus\mathbb{Z}_{12}\oplus (\mathbb{Z}_{4})^9$  \\ \hline

$M_3$ &$z_{0}^{12}+z^{2}_{1}z_{3}+z_{2}^{4}+z_{3}^{3}+z_{1}z_{4}^{8}$& $\mathbb{Z}^{120}\oplus(\mathbb{Z}_4)^2$ \\ \hline

$M_4$   &$z_{0}^{12}+z_{1}^{2}z_{4}+z_{2}^{4}+z_{1}z_{3}^{3}+z_{4}^8$&$\mathbb{Z}^{24}\oplus\mathbb{Z}_{24}\oplus(\mathbb{Z}_6)^2 \oplus (\mathbb{Z}_{3})^{6}$  \\

\hline
\end{tabular}
\caption{Sasaki manifolds of positive Ricci curvature from input CY}
\end{table}
\end{example}
 The above example also illustrates an interesting feature of the construction. The initial K3 orbifold $X_f$ is mirror dual to $X_{f_T}$ in the sense of Dolgachev and Voisin by \cite{BottCompPrid} Table 10. Now because of $M_1$ and $M_3$ are Sasaki-Einstein $7$-manifolds, we know that the AdS/CFT correspondence should give rise to a three dimensional superconformal theory \cite{GMSW}. Therefore, our construction illustrates a way to move from a mirror symmetry framework to an AdS/CFT framework.

\end{document}